\numberwithin{equation}{section}
\theoremstyle{plain}
\newcommand{\rw}{\mathtt{w}}
\newcommand{\rx}{\mathtt{x}}
\newcommand{\rz}{\mathrm{z}}
\newcommand{\rv}{\mathrm{v}}
\newcommand{\vv}{\mathbf{v}}
\newcommand{\vw}{\mathbf{w}}
\newcommand{\vx}{\mathbf{x}}
\newcommand{\vz}{\mathbf{z}}
\newcommand{\mV}{\mathbf{V}}
\newcommand{\mX}{\mathbf{X}}
\newcommand{\nmathbf}{\bm}
\def\bfX{\nmathbf X}
\def\boldfacefake#1{\kern-4pt
    \hbox{ \mathsurround=0pt
    \hbox to 0.4pt{$#1$\hss}\hbox to 0.4pt{$#1$\hss}\hbox {$#1$}}}
\newcommand{\be}{\begin{eqnarray}}
\newcommand{\ee}{\end{eqnarray}}
\newcommand{\ba}{\begin{eqnarray*}}
\newcommand{\ea}{\end{eqnarray*}}
\newcommand{\reals}{\mbox{\rm I\kern-.20em R}}
\newcommand{\sreals}{\mbox{\small \rm I\kern-.20em R}}
\newtheorem{theorem0}{Theorem}
\newtheorem{lemma0}{Lemma}
\newtheorem{remark0}{Remark}
\newtheorem{fact0}{Fact}
\newtheorem{example0}{Example}
\newtheorem{definition0}{Definition}
\newtheorem{corollary0}{Corollary}
\newtheorem{proposition0}{Proposition}
\newtheorem{algorithmY}{Algorithm}
\newtheorem{conjecture0}{Conjecture}
\newenvironment{theorem}{\begin{theorem0} \mbox{} }{\end{theorem0}}
\newenvironment{lemma}{\begin{lemma0} \mbox{}}{\end{lemma0}}
\newenvironment{definition}{\begin{definition0}
\mbox{}}{\end{definition0}}
\newenvironment{proposition}{\begin{proposition0}\mbox{}
}{\end{proposition0}}
\begin{document}

\begin{frontmatter}
\title{An Information-Theoretic Alternative to the Cronbach's Alpha Coefficient of Item Reliability} 
\runtitle{Information-Theoretic Measure of Reliability}

\begin{aug}
\author{\fnms{Ernest} \snm{Fokou\'e}\thanksref{t1,m1}\ead[label=e1]{epfeqa@rit.edu}},
\author{\fnms{Necla} \snm{G\"und\"uz}\thanksref{m2}\ead[label=e2]{ngunduz@gazi.edu.tr}}

\thankstext{t1}{Corresponding author}
\runauthor{Fokou\'e and G\"und\"uz}

\affiliation{Rochester Institute of Technology\thanksmark{m1} and Gazi University\thanksmark{m2}}

\address{\thanksmark{m1}School of Mathematical Sciences\\
Rochester Institute of Technology, Rochester, New York, USA\\
\printead{e1}}

\address{\thanksmark{m2}Department of Statistics, Faculty of Science\\
Gazi University, Ankara, Turkey\\
\printead{e2}}
\end{aug}

\begin{abstract}
We propose an information-theoretic alternative to the popular
Cronbach alpha coefficient of reliability. Particularly suitable for
contexts in which instruments are scored on a strictly nonnumeric scale,
our proposed index is based on functions of the entropy of the distributions of defined on the sample
space of responses. Our reliability index tracks the Cronbach alpha coefficient uniformly while offering
several other advantages discussed in great details in this paper.
\end{abstract}

\begin{keyword}[class=AMS]
\kwd[Primary ]{62H30}
\kwd[; secondary ]{62H25}
\end{keyword}

\begin{keyword}
\kwd{Cronbach} \kwd{Entropy} \kwd{Uncertainty} \kwd{Reliability}
\kwd{Variation of Information}
\end{keyword}
\end{frontmatter}

\section{Introduction}
Suppose that we are given a dataset represented by an $n \times p$ matrix $\bfX$ whose $i$th row
$\vx_i^\top \equiv (\rx_{i1}, \rx_{i2},\cdots,\rx_{ip})$
denotes the $p$-tuple of characteristics, with each $\rx_{ij} \in \{1,2,3,4,5\}$ representing  the Likert-type level (order) of preference of respondent $i$
on item $j$. This Likert-type score is obtained by translating/mapping the response levels $\{{\tt Strong \, Disagree}, {\tt Disagree}, {\tt Neutral}, {\tt Agree}, {\tt Strongly \, Agree}\}$ into pseudo-numbers $\{1,2,3,4,5\}$.
\begin{center}
\begin{tabular}{|c|c|c|c|c|}
\toprule
{\tt Strong Disagree} & {\tt Disagree} & {\tt Neutral} & {\tt Agree} & {\tt Strongly Agree} \\ \hline
$\bigcirc$ & $\bigcirc$ & $\bigcirc$ & $\bigcirc$ & $\bigcirc$ \\ \hline
$1$ & $2$ & $3$ & $4$ & $5$ \\
\bottomrule
\end{tabular}
\end{center}
\noindent A usually crucial part in the analysis of questionnaire data is the calculation of Cronbach's alpha coefficient which measures the
internal consistency or reliability/quality of the data. Let $X= (X_1,X_2,\cdots,X_p)^\top$ be a $p$-tuple representing the $p$ items of a questionnaire.
Initially proposed by \cite{Cronbach:1951:1} and later used and re-explained extensively by thousands of researchers and practitioners like
\cite{Bland:1997:1} Cronbach's alpha coefficient is a function of the ratio of the sum of the idiosyncratic item variances over the variance of the sum of the items, and is given by
$$
\alpha = \left(\frac{p}{p-1}\right)\left[1-\frac{\sum_{j=1}^p{\mathbb{V}(X_j)}}{\mathbb{V}\left(\sum_{\ell=1}^p{X_\ell}\right)}\right].
$$

\noindent The coefficient of Cronbach $\alpha$ will be $1$ if the items are all the same and $0$ if none is related to another. Because it is depend on the variance of the sum of a group of independent variables and the sum of their variances. If the variables are positively correlated, the variance of the sum will be increased. If the items making up the score are all identical and so perfectly correlated, all the ${\mathbb{V}(X_j)}$ will be equal and ${\mathbb{V}\left(\sum_{\ell=1}^p{X_\ell}\right)}=p^2 {\mathbb{V}(X_j)}$, so that
$\frac{\sum_{j=1}^p{\mathbb{V}(X_j)}}{\mathbb{V}\left(\sum_{\ell=1}^p{X_\ell}\right)} = \frac{1}{p}$ and $\alpha = 1$.\\

The empirical version of Cronbach's alpha coefficient of internal consistency is given by
$$
\widehat{{\alpha}} =
\left(\frac{p}{p-1}\right)\left[1-\frac{\displaystyle \sum_{j=1}^p{\sum_{i=1}^n{\left(\rx_{ij}-\frac{1}{n}\sum_{i=1}^n{\rx_{ij}}\right)^2}}}
{\displaystyle \sum_{i=1}^n{\left(\sum_{j=1}^p{\rx_{ij}}-\frac{1}{n}\sum_{i=1}^n{\sum_{j=1}^p{\rx_{ij}}}\right)^2}}\right].
$$

\begin{definition}
Let $\mathcal{D}=\{\vx_1,\vx_2,\cdots,\vx_n\}$ be a dataset with
$\vx_i^\top = (\rx_{i1}, \rx_{i2},\cdots,\rx_{ip})$.
An observation vector $\vx_i$ will be called a {\it zero variation} vector if
$\rx_{ij}= {\tt constant},\,\, j=1,\cdots,p$. Respondents with
{\it zero variation} response vectors will be referred to as
{\tt single minded} respondents/evaluators.
\end{definition}
In fact, {\it zero variation} responses essentially reduce a $p$ items  survey to a single item survey.

\begin{theorem}
Let $X= (X_1,X_2,\cdots,X_p)^\top$ be a $p$-tuple representing the $p$ items of a questionnaire.
If $X$ is {\it zero variation}, then the Cronbach's alpha coefficient will be equal to $1$.
\end{theorem}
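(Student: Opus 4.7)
The plan is to lift the \emph{zero variation} property, originally stated in the definition at the level of an individual observation vector $\vx_i$, to the population level: if every realization of $X=(X_1,\ldots,X_p)^\top$ is a zero variation vector, then the components of $X$ coincide almost surely, i.e.\ $X_1=X_2=\cdots=X_p$. Once this identification is in hand, the computation of $\alpha$ collapses to the calculation already sketched in the paragraph immediately preceding the theorem, so the proof is essentially a direct substitution into the definition of $\alpha$.

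Concretely, first I would argue from the definition that zero variation across items for every respondent forces the marginal distribution of each $X_j$ to coincide with the common value attained on each response vector; in particular $\mathbb{V}(X_j)=\mathbb{V}(X_1)$ for every $j=1,\ldots,p$, so that
\[
\sum_{j=1}^p \mathbb{V}(X_j) \;=\; p\,\mathbb{V}(X_1).
\]
Next I would compute the denominator: since $X_\ell=X_1$ a.s., we have $\sum_{\ell=1}^p X_\ell = pX_1$, and hence
\[
\mathbb{V}\!\left(\sum_{\ell=1}^p X_\ell\right) \;=\; \mathbb{V}(pX_1) \;=\; p^2\,\mathbb{V}(X_1).
\]
Taking the ratio of these two quantities gives $1/p$, so that substituting into Cronbach's formula yields
\[
\alpha \;=\; \left(\frac{p}{p-1}\right)\!\left[1-\frac{1}{p}\right] \;=\; \left(\frac{p}{p-1}\right)\!\left(\frac{p-1}{p}\right) \;=\; 1,
\]
which is the claim.

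The main obstacle is not computational but interpretive: the definition of \emph{zero variation} is phrased for a single sample vector $\vx_i$, while the theorem asserts a statement about the random $p$-tuple $X$. To make the argument rigorous I would state explicitly at the outset the convention that ``$X$ is zero variation'' means $X_1=\cdots=X_p$ almost surely (equivalently, every respondent in the underlying population is a single-minded evaluator), and optionally complement the population-level proof with the analogous calculation for the empirical version $\widehat{\alpha}$, where $\rx_{ij}$ is constant in $j$ for every $i$ and the same ratio $1/p$ emerges from the sum-of-squares expressions in the numerator and denominator.
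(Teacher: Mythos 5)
Your proposal is correct and follows essentially the same route as the paper's own proof: identify all components with a single common random variable (the paper calls it $W$, you use $X_1$), compute $\sum_{j=1}^p\mathbb{V}(X_j)=p\,\mathbb{V}(W)$ and $\mathbb{V}\bigl(\sum_{\ell=1}^p X_\ell\bigr)=p^2\,\mathbb{V}(W)$, and substitute to get $\alpha=1$. Your explicit remark that the sample-level definition of \emph{zero variation} must be lifted to the almost-sure identity $X_1=\cdots=X_p$ is a small but worthwhile clarification that the paper leaves implicit.
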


\begin{proof}
If $X= (X_1,X_2,\cdots,X_p)^\top$ is {\it zero variation}, then $X_j=W$ for $j=1,\cdots,p$,
and $\sum_{j=1}^p{X_j}=pW$. As a result, $\sum_{j=1}^p{\mathbb{V}(X_j)}=p\mathbb{V}(W)$ and
$\mathbb{V}\left(\sum_{j=1}^p{X_j}\right)=\mathbb{V}(pW)=p^2\mathbb{V}(W)$. Therefore,
$$\alpha = 
 \left(\frac{p}{p-1}\right)\left[1-\frac{p\mathbb{V}(W)}{p^2\mathbb{V}(W)}\right]
=  \left(\frac{p}{p-1}\right)\left[1-\frac{1}{p}\right] = 1
$$
\end{proof}

We use a straightforward adaptation of the Cronbach's alpha coefficient to measure {\it respondent reliability}.
\begin{definition}
Let $\mathcal{D}=\{\vx_1,\vx_2,\cdots,\vx_n\}$ be a dataset with
$\vx_i^\top = (\rx_{i1}, \rx_{i2},\cdots,\rx_{ip})$. Let the estimated variance of the $i$th respondent be
${\tilde{S}_i^2} = \sum_{j=1}^p{(\rx_{ij}-\bar{\rx}_i)^2/(p-1)}$.
Let $Z_j = \sum_{i=1}^n{\rx_{ij}}$ represent the sum of the scores given by all the $n$
respondents to item $j$. Our respondent reliability is estimated by
$$
\widehat{\tilde{\alpha}} =
\left(\frac{n}{n-1}\right)\left[1-\frac{\displaystyle \sum_{i=1}^n{\sum_{j=1}^p{\left(\rx_{ij}-\frac{1}{p}\sum_{j=1}^p{\rx_{ij}}\right)^2}}}
{\displaystyle \sum_{j=1}^p{\left(\sum_{i=1}^n{\rx_{ij}}-\frac{1}{p}\sum_{j=1}^p{\sum_{i=1}^n{\rx_{ij}}}\right)^2}}\right]
$$
\end{definition}
Given a data matrix $\bfX$, respondent reliability can be computed in practice by simply taking the
Cronbach's alpha coefficient of $\bfX^\top$, the transpose of the data matrix $\bfX$. Let $m$ be the
number of {\it nonzero variation}. If $m \ll p$ and $m/n$ is very small, then respondent reliability will
be very poor.

Despite its widespread use of Likert-type data since it creation, Cronbach's alpha coefficient is rigorously speaking
not suitable for categorical data for the simple reason that averages on ordinal measurements are often difficult
to interpret at best and misleading at worst. For many years researchers working on the clustering
of Likert-type inappropriately resorted to average-driven methods like kMeans clustering. Fortunately, there has been
a surge of contributions to the clustering of categorical data whereby appropriate methods have been used.
At the heart of the clustering of categorical data is the need to define appropriate measure of similarity.
Recognizing the possibility to preprocess Likert-type questionnaire data into a collection of
estimate probability distributions over the sample spaces of responses, many authors have developed
powerful, scalable and highly techniques for clustering categorical data, most of them
based on information-theoretic \cite{Cover:1991:1}
 concepts like entropy \cite{Huang:1998:1}, \cite{Guha:2000:1}, \cite{Barbara:2002:1}, \cite{San:2004:1}, \cite{Li:2004:1}, \cite{Chen:2005:1}, \cite{Li:2006:1}, \cite{Meila:2007:1}, \cite{Cai:2007:1},
mutual information, variation of information \cite{Meila:2003:1},  along with many other distances and measures on probability distributions like
the Bhattacharya distance \cite{Bhattacharya:1943:1}, \cite{Mak:1996:1}, \cite{Choi:2003:1}, \cite{Goudail:2004:1}, \cite{You:2009:1}, \cite{Reyes:2006:1}, the Kullback-Leibler divergence and the Hellinger distance just to name a few. In this paper, we use information-theoretic tools
and concepts to create several measures of internal consistency of questionnaire data.

\section{Information-Theoretic Measures of Internal Data Consistency}
Let $X_j$ represent one of the questions on the questionnaire, and consider the $n$ responses,
$\{\rx_{1j}, \cdots, \rx_{ij}, \cdots,\rx_{nj}\}$ provided by the $n$ evaluators.
Let $\vv_j = (\rv_{j1}, \cdots,\rv_{jk},\cdots,\rv_{jK})^\top$ denote the vector containing the relative frequencies
of each Likert level for question $j$. With a total of $n$ questionnaires collected, we have
\begin{eqnarray}
\widehat{\rv}_{jk} = \frac{1}{n}\sum_{i=1}^n{I({\rm x}_{ij}=k)}, \qquad k=1,2,\cdots,K \quad \text{and} \quad j=1,2,\cdots,p.
\label{eq:entr:item:0}
\end{eqnarray}
Using \eqref{eq:entr:item:0}, one can then form probabilistic vectors $\widehat{\vv}_j^\top = (\widehat{\rv}_{j1}, \cdots, \widehat{\rv}_{jk}, \cdots,\widehat{\rv}_{jK})$, for $j=1,2,\cdots,p$.  Each vector $\widehat{\vv}_j$ essentially represents an approximate probability distribution on the sample space
made up of the $K$ response levels. Using this probabilistic representation of each question $j$,
we can compare the variability of each item of the questionnaire using the entropy, specifically
\begin{eqnarray}
{H}(\widehat{\vv}_j) = -\sum_{k=1}^K{\widehat{\rv}_{jk}\log_2(\widehat{\rv}_{jk})}
\label{eq:entr:item:1}
\end{eqnarray}

We can imagine a transformation of the $n \times p$ data matrix $\mX$ into a probabilistic $p \times K$ counterpart
$\mV$ where each row represent the approximate probability distribution of the corresponding question (item).
The entropy of each question indicates the variability of the answers given by students on that question.
For a given course and a given instructor, a small value of this entropy would indicate a greater degree of agreement of
his/her student on that item, and therefore suggest a more careful examination of the scores on that item. As far as the relationship between
items is concerned, information theory also provides a wealth of measures. The symmetrized Kullback-Leibler
divergence given by
$$
{\tt KL}_2(\vv_i, \vv_j) = \frac{1}{{2}}\Big\{{\tt KL}(\vv_i, \vv_j) +{\tt KL}(\vv_j, \vv_i)\Big\} = \frac{1}{{2}}{\sum_{k=1}^K\left\{\rv_{ik}\log\left(\frac{\rv_{ik}}{\rv_{jk}}\right)+
\rv_{jk}\log\left(\frac{\rv_{jk}}{\rv_{ik}}\right)\right\}},
$$
where
$$
{\tt KL}(\vv_i, \vv_j) = \sum_{k=1}^K{\rv_{ik}\log\left(\frac{\rv_{ik}}{\rv_{jk}}\right)}
\quad \textrm{and} \quad {\tt KL}(\vv_j, \vv_i) =
\sum_{k=1}^K{\rv_{jk}\log\left(\frac{\rv_{jk}}{\rv_{ik}}\right)},
$$
is usually the default measure used by most authors. The {\it Kullback-Leibler} divergence is closely related the {\it mutual information}
$$
I(\vv_i, \vv_j) = \sum_{k=1}^K\left\{\sum_{l=1}^K\left\{\rv_{ik,jl}\log_2\left(\frac{\rv_{ik,jl}}{\rv_{ik}\rv_{jl}}\right)\right\}\right\},
$$
which has been used extensively in machine learning to define a distance known as the
{\it Variation of Information}, and defined by
$$
{\tt VI}(\vv_i, \vv_j) = H(\vv_i) + H(\vv_j)- 2 I(\vv_i, \vv_j).
$$
Many other non-information-theoretic similarity and variation  measures operating on probabilistic vectors can be used to further investigate several
aspects of the categorical data at hand. One that have been extensively used in the machine learning and data mining
community is the Bhattacharya distance  \cite{Bhattacharya:1943:1} is given by
$$
{\tt BC}(\vv_i, \vv_j) = -\log {\tt F}(\vv_i, \vv_j),
$$
where
$$
{\tt F}(\vv_i, \vv_j) = \sum_{k=1}^K{\sqrt{\rv_{ik}\rv_{jk}}},
$$
is known as the Bhattacharya coefficient  or Fidelity coefficient. The Bhattacharya distance
${\tt BC}(\vv_i, \vv_j)$ measures the overlap between $\vv_i$ and $\vv_j$. The Bhattacharya distance
has been immensely used in various data mining and machine learning applications \cite{Mak:1996:1}, \cite{Choi:2003:1}, \cite{Goudail:2004:1},
 \cite{You:2009:1}. It is interesting to note that
the Bhattachrya distance is related to {\it total variation} measure defined by
$$
\Delta(\vv_i, \vv_j) = \frac{1}{{2}}\sum_{k=1}^K{|\rv_{ik}-\rv_{jk}|} = \frac{1}{{2}}\|\vv_i-\vv_j\|_1
$$
where $\|\cdot\|_1$ is the $\ell_1$ norm. Another very commonly used distance is the Hellinger distance between
$\vv_i$ and $\vv_j$ is given by
$$
{\tt Hellinger}(\vv_i, \vv_j) = \frac{1}{\sqrt{2}}\sqrt{\sum_{k=1}^K{(\sqrt{\rv_{ik}}-\sqrt{\rv_{jk}})^2}}
=\frac{1}{\sqrt{2}}\|\sqrt{\vv_i}-\sqrt{\vv_j}\|_2,
$$
where $\|\cdot\|_2$ is the Euclidean norm or $\ell_2$ norm,
 $\sqrt{\vv_i} = (\sqrt{\rv_{i1}},\cdots,\sqrt{\rv_{iK}})$ and $\sqrt{\vv_j} = (\sqrt{\rv_{j1}},\cdots,\sqrt{\rv_{jK}})$.

\begin{definition} Let $Q$ denote an instrument (questionnaire) for which the realized matrix of obtained
responses is given by $\bfX$ with entries $\rx_{ij} \in \{1,2, \cdots, K\}$. We propose an information-theoretic
measure of the reliability of $Q$, referred to as the information consistency ratio of $Q$ and given by
\begin{eqnarray}
\varphi = 1 - \frac{\underset{i=1,\cdots,n}{\mathtt{min}}\Big\{H\left(\widehat{\vz}_i\right)\Big\}}
{\underset{\vz}{\mathtt{max}}\Big\{H\left({\vz}\right)\Big\}}
= 1 - \frac{\underset{i=1,\cdots,n}{\mathtt{min}}\Big\{H\left(\widehat{\vz}_i\right)\Big\}}{H\left(\frac{1}{K},\cdots,\frac{1}{K}\right)}= 1 - \frac{\underset{i=1,\cdots,n}{\mathtt{min}}\Big\{H\left(\widehat{\vz}_i\right)\Big\}}{\log_2(K)},
\label{eq:entr:index:1}
\end{eqnarray}
where each $\widehat{\vz}_i = \{\widehat{\rz}_{ik}, \, k=1,2,\cdots,K\}$ defines an approximate
probability distribution on the sample space of possible responses, and $H(\cdot)$ is the entropy function,
with
\begin{eqnarray}
\widehat{\rz}_{ik} = \frac{1}{p}\sum_{j=1}^p{I({\rm x}_{ij}=k)} \quad \text{and} \quad {H}(\widehat{\vz}_i) = -\sum_{k=1}^K{\widehat{\rz}_{ik}\log_2(\widehat{\rz}_{ik})}.
\label{eq:entr:stud:1}
\end{eqnarray}
\end{definition}

\begin{lemma}
Let $\vz$ denote any probability measure defined on some $K$-dimensional sample space, with
each $\rz_k = \Pr\{E_k\}, \, k=1.2,\cdots,K$. Let $H(\cdot)$ denote the entropy function, such
that for every $\vz$, we have $H(\vz) = -\sum_{k=1}^K{{\rz}_{k}\log_2({\rz}_{k})}$.
Then
$$
\underset{\vz}{\mathtt{max}}\Big\{H\left({\vz}\right)\Big\} = \log_2(K).
$$
\end{lemma}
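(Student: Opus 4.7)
The plan is to prove this classical maximum-entropy fact by constrained optimization: I would view $H(\vz)=-\sum_{k=1}^K \rz_k \log_2 \rz_k$ as a smooth concave function on the open simplex $\{\vz : \rz_k>0, \sum_k \rz_k=1\}$ and show that its unique interior critical point is the uniform distribution, at which the value is exactly $\log_2(K)$. Concavity plus uniqueness of the critical point then promote this local maximizer to the global one, and a boundary check handles the degenerate distributions where some $\rz_k=0$.

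First, I would set up the Lagrangian $\mathcal{L}(\vz,\mu) = -\sum_{k=1}^K \rz_k \log_2 \rz_k - \mu\bigl(\sum_{k=1}^K \rz_k - 1\bigr)$. Differentiating with respect to $\rz_k$ yields $-\log_2 \rz_k - \tfrac{1}{\ln 2} - \mu = 0$, so every $\rz_k$ equals the same constant; invoking the constraint $\sum_k \rz_k = 1$ forces $\rz_k=1/K$ for all $k$. Substituting back gives
$$
H\bigl(\tfrac{1}{K},\ldots,\tfrac{1}{K}\bigr) = -\sum_{k=1}^K \tfrac{1}{K}\log_2\tfrac{1}{K} = \log_2(K).
$$
To confirm this is a maximum rather than a saddle or minimum, I would note that the Hessian of $-\rz_k \log_2 \rz_k$ is diagonal with entries $-1/(\rz_k \ln 2) < 0$, so $H$ is strictly concave on the open simplex and the unique interior critical point is the unique global maximizer on the interior.

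For the boundary case, I would use the convention $0\log_2 0 = 0$ and argue by continuous extension: $H$ extends continuously to the closed simplex, which is compact, so the maximum is attained; since any boundary point has some coordinate zero and can be strictly improved by perturbing mass into that coordinate (a direct consequence of the concavity argument above), the maximizer must lie in the interior, hence at the uniform distribution.

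Honestly, there is no serious obstacle here; the result is textbook (see, e.g., Cover and Thomas). The only thing to be careful about is making the argument self-contained, and I would probably prefer an even slicker route via Gibbs' inequality: applying the nonnegativity of the Kullback--Leibler divergence, $\mathrm{KL}(\vz \,\|\, \vu)\ge 0$ with $\vu=(1/K,\ldots,1/K)$, immediately gives $\sum_k \rz_k \log_2(K \rz_k) \ge 0$, i.e. $\log_2(K) - H(\vz) \ge 0$, with equality iff $\vz = \vu$. This single-line derivation would be my preferred presentation, with the Lagrange argument kept in reserve as a constructive identification of the maximizer.
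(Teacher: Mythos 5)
Your proposal is correct, and it is in fact a genuinely more complete argument than the one the paper gives. The paper's proof simply \emph{asserts} that the uniform distribution is the entropy maximizer (``since entropy essentially measures uncertainty, the probability measure for which the uncertainty is the largest is the one in which all events are equally likely'') and then verifies that $H\left(\tfrac{1}{K},\cdots,\tfrac{1}{K}\right)=\log_2(K)$; the optimality claim itself is never established. You supply exactly the missing ingredient, twice over: the Lagrangian computation identifies the uniform distribution as the unique interior critical point, strict concavity of $H$ promotes it to the global interior maximizer, and the continuous extension with the $0\log_2 0=0$ convention plus the perturbation argument rules out boundary maximizers. Your alternative one-line route via Gibbs' inequality, $\mathrm{KL}(\vz\,\|\,\vu)\ge 0$ with $\vu=(1/K,\cdots,1/K)$ giving $\log_2(K)-H(\vz)\ge 0$ with equality iff $\vz=\vu$, is the cleanest self-contained presentation and also yields uniqueness of the maximizer, which the paper's heuristic does not. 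Either of your arguments would strengthen the paper; the only caveat is that the Gibbs route implicitly relies on the nonnegativity of the Kullback--Leibler divergence, which itself is usually proved by the same concavity (Jensen) argument, so the two presentations are not as independent as they might appear.
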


\begin{proof} Since entropy essentially measures uncertainty (disturbance),
the probability measure for which the uncertainty is the largest is the probability measure $\vz^*$ in which all
the events are equally likely, i.e., $\rz_k^* = \Pr\{E_k\}=\frac{1}{K}, \, k=1.2,\cdots,K$.
\begin{eqnarray*}
\underset{\vz}{\mathtt{max}}\Big\{H\left({\vz}\right)\Big\} = H(\vz^*)
= H\left(\frac{1}{K},\cdots,\frac{1}{K}\right) = -\sum_{k=1}^K{\frac{1}{K}\log_2\left(\frac{1}{K}\right)}=\log_2(K).
\end{eqnarray*}
\end{proof}

\begin{proposition}
Let $Q_0$ denote a special questionnaire whose items are all mutually independent (unrelated).
Then the corresponding information consistency ratio $\varphi_0$ of $Q_0$, is such that
$$
\underset{p \rightarrow \infty}{\lim}{\varphi_0} = 0.
$$
\end{proposition}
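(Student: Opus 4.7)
The plan is to unpack what "mutually independent (unrelated)" means in the present setup and then exploit the law of large numbers applied row-by-row. For each respondent $i$, the row-wise empirical distribution $\widehat{\vz}_i$ defined in \eqref{eq:entr:stud:1} is obtained by tabulating the $p$ responses $\rx_{i1},\ldots,\rx_{ip}$ across the $K$ Likert levels. If the items are genuinely mutually independent and carry no common structure, then for every respondent $i$ the responses $\rx_{i1},\ldots,\rx_{ip}$ are i.i.d.\ across $j$, and the least informative (maximum entropy) choice for their common marginal on $\{1,\ldots,K\}$ is the uniform distribution. I would state this as the operative model for $Q_0$: for each fixed $i$, $\Pr\{\rx_{ij}=k\}=1/K$ independently across $j=1,\ldots,p$.

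Under this model, the key step is to apply the strong law of large numbers to each coordinate $k$: since $\widehat{\rz}_{ik}=\tfrac{1}{p}\sum_{j=1}^p I(\rx_{ij}=k)$ is an average of i.i.d.\ Bernoulli$(1/K)$ indicators, we have $\widehat{\rz}_{ik}\xrightarrow{\text{a.s.}}1/K$ as $p\to\infty$ for every $k=1,\ldots,K$ and every $i=1,\ldots,n$. Because the entropy function $H$ is continuous on the probability simplex, the continuous mapping theorem gives
\begin{equation*}
H(\widehat{\vz}_i)\;\xrightarrow{\text{a.s.}}\;H\!\left(\tfrac{1}{K},\ldots,\tfrac{1}{K}\right)\;=\;\log_2(K),\qquad i=1,\ldots,n,
\end{equation*}
where the last equality comes from the lemma just established.

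Since $n$ is held fixed and the minimum of finitely many almost surely convergent sequences converges almost surely to the common limit, we obtain $\min_{i=1,\ldots,n} H(\widehat{\vz}_i)\to\log_2(K)$ as $p\to\infty$. Substituting into the definition \eqref{eq:entr:index:1} of $\varphi$ specialized to $Q_0$ yields
\begin{equation*}
\varphi_0 \;=\; 1-\frac{\min_{i=1,\ldots,n}H(\widehat{\vz}_i)}{\log_2(K)}\;\longrightarrow\;1-\frac{\log_2(K)}{\log_2(K)}\;=\;0,
\end{equation*}
which is the stated limit.

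The routine calculations (the SLLN application and the entropy evaluation at the uniform) are standard; the only genuinely delicate point, and the one I would spend most of the write-up justifying, is the interpretive step that translates "items mutually independent (unrelated)" into the i.i.d.\ uniform model used above. If instead one only assumes independence with a non-uniform common marginal $\vv^\star$, the argument still goes through with $\log_2(K)$ replaced by $H(\vv^\star)$ in the numerator, but the ratio no longer forces $\varphi_0\to 0$ unless the marginal is taken to be the maximally uncertain (uniform) one, which is the natural reading of "unrelated" and which I would flag explicitly.
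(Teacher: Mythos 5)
Your proof is correct and follows essentially the same route as the paper's: interpret ``mutually independent'' as the i.i.d.\ uniform model, apply the law of large numbers to get $\widehat{\vz}_i \rightarrow (1/K,\ldots,1/K)$, and conclude via continuity of $H$ and the definition of $\varphi$. You are in fact somewhat more careful than the paper on two points it glosses over --- the continuous-mapping step from convergence of $\widehat{\vz}_i$ to convergence of $H(\widehat{\vz}_i)$ and of the minimum over the fixed finite index set $i=1,\ldots,n$, and the observation that the conclusion genuinely requires the uniform marginal rather than independence alone.
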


\begin{proof}
With $Q_0$ denoting a  questionnaire whose items that are all mutually independent (unrelated), the matrix
of realized responses has entries $\rx_{ij}$ that a realization of the discrete uniform distribution
on $\{1,2,\cdots,K\}$, or specifically, $\rx_{ij} \sim {\tt uniform}(1,2,\cdots,K)$. It follows that
for each $i=1,2,\cdots,n$, we must have
$$
\underset{p \rightarrow \infty}{\lim}{\widehat{\rz}_{ik}} =
\underset{p \rightarrow \infty}{\lim}\left\{\frac{1}{p}\sum_{j=1}^p{I({\rm x}_{ij}=k)}\right\} = \frac{1}{K}, \quad k=1,2,\cdots,K.
$$
In other words, given enough questions (items), the empirical proportion of answers will converge to its
theoretical counterpart by the law of large number. We therefore have the uniform generation of answers, the
limiting distribution
$$
\underset{p \rightarrow \infty}{\lim}{\widehat{\vz}_{i}} = \vz^* =  \left(\frac{1}{K},\cdots,\frac{1}{K}\right).
$$
Finally, since all the response distributions will tend to converge to the same maximal
measure $\vz^*$, i.e. $\widehat{\vz}_{i} \overset{\mathscr{D}}{\rightarrow} \vz^*$, for $i=1,2,\cdots,n$, we must have
$$
\underset{i=1,\cdots,n}{\mathtt{min}}\Big\{H\left(\widehat{\vz}_i\right)\Big\} \overset{P}{\rightarrow} H(\vz^*) = \underset{\vz}{\mathtt{max}}\Big\{H\left({\vz}\right)\Big\},
$$
and therefore
$$
\underset{p \rightarrow \infty}{\lim}{\varphi_0} = 1 - \frac{\underset{i=1,\cdots,n}{\mathtt{min}}\Big\{H\left(\widehat{\vz}_i\right)\Big\}}
{\underset{\vz}{\mathtt{max}}\Big\{H\left({\vz}\right)\Big\}} = 1 - \frac{H(\vz^*)}{H(\vz^*)} = 1-1 = 0.
$$
\end{proof}

\begin{proposition}
Let $Q_+$ denote a special questionnaire whose items are all identical.
Then the corresponding information consistency ratio $\varphi_+$ of $Q_+$, is such that ${\lim}{\varphi_+} = 1$.
\end{proposition}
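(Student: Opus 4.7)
The plan is to argue directly from the definition of the information consistency ratio in \eqref{eq:entr:index:1} together with the characterization of $\widehat{\vz}_i$ given in \eqref{eq:entr:stud:1}. First I would unpack what it means for $Q_+$ to consist of identical items: for every respondent $i$, the responses satisfy $\rx_{i1}=\rx_{i2}=\cdots=\rx_{ip}$, so there exists some level $k_i\in\{1,2,\cdots,K\}$ with $\rx_{ij}=k_i$ for all $j=1,\cdots,p$. In the language of the first theorem of the paper, every respondent's row is a \emph{zero variation} vector.

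Next I would compute $\widehat{\vz}_i$ for such a respondent. Because $I(\rx_{ij}=k)=1$ when $k=k_i$ and $0$ otherwise, the definition of $\widehat{\rz}_{ik}$ gives $\widehat{\rz}_{i k_i}=1$ and $\widehat{\rz}_{ik}=0$ for $k\neq k_i$. That is, $\widehat{\vz}_i$ is a Dirac mass on $k_i$. Applying the entropy formula with the standard convention $0\log_2 0=0$, I obtain $H(\widehat{\vz}_i)= -1\cdot\log_2(1) - \sum_{k\neq k_i} 0\cdot\log_2 0 = 0$.

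Since this holds for every $i$, we have $\min_{i=1,\cdots,n}\{H(\widehat{\vz}_i)\}=0$. Plugging into \eqref{eq:entr:index:1} and using the previous lemma to identify the denominator as $\log_2(K)$ yields
\begin{equation*}
\varphi_+ \;=\; 1 \;-\; \frac{0}{\log_2(K)} \;=\; 1,
\end{equation*}
so in fact $\varphi_+=1$ for every $p$, and trivially $\lim \varphi_+=1$.

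No real obstacle arises here; the only point requiring a small amount of care is the treatment of the $0\log_2 0$ terms in the entropy, which is the standard continuous extension and is implicit in the entropy formula used throughout the paper. Everything else is bookkeeping with the definitions.
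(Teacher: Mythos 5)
Your argument is correct and follows essentially the same route as the paper's own proof: identical items force each $\widehat{\vz}_i$ to be a point mass, hence $H(\widehat{\vz}_i)=0$ for every $i$, the minimum is $0$, and $\varphi_+=1$. Your version is if anything slightly more careful, since you allow the constant level $k_i$ to vary with the respondent and you make the $0\log_2 0$ convention explicit, but the substance is identical.
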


\begin{proof}
With $Q_+$ denoting a  questionnaire whose items that are all identical, the matrix
of realized responses has entries $\rx_{ij} = c$, for some constant
$c \in \{1,2,\cdots,K\}$. Then for each $i=1,2,\cdots,n$, there exists
$k_+ \in \{1,2,\cdots,K\}$ such that
$$
\widehat{\rz}_{ik} = \left\{
\begin{array}{ll}
1 & \quad k = k_+ \\
0 & \quad k \neq k_+
\end{array}\right.
$$
In other words, with $Q_+$, the approximate distributions $\widehat{\vz}_i$ of the answers of each respondent
are of the form $(1,0,\cdots,0)$, or $(0,1,\cdots,0)$ or $(0,0,\cdots,1)$. Therefore, for $Q_+$, we must have
$H(\widehat{\vz}_i) = 0,  \quad i=1,\cdots,n$, with the result being $\underset{i=1,\cdots,n}{\mathtt{min}}\Big\{H\left(\widehat{\vz}_i\right)\Big\}=0$, and therefore
$$
\varphi_+ = 1 - \frac{\underset{i=1,\cdots,n}{\mathtt{min}}\Big\{H\left(\widehat{\vz}_i\right)\Big\}}
{\underset{\vz}{\mathtt{max}}\Big\{H\left({\vz}\right)\Big\}} = 1 - \frac{0}{H(\vz^*)} = 1-0 = 1.
$$
\end{proof}

\begin{definition}
\label{def:phi:2}
Let $Y_i$ represent the most frequently occurring answer in respondent $i$'s vector of $p$ answers. It is
easy to see that $Y_i$ has the same sample space as each question/item, namely the same Likert scale in our case.
Using the random variables $Y_i$, we can then define $\widehat{\vw} = (\widehat{\rw}_{1}, \cdots,\widehat{\rw}_{k},\cdots,\widehat{\rw}_{K})^\top$ in the same manner that we define
$\vv_j$ earlier. More specifically, we have
\begin{eqnarray}
Y_i = \underset{k=1,\cdots,K}{\mathtt{argmax}}\left\{\frac{1}{p}\sum_{j=1}^p{I(X_{ij}=k)}\right\}
\quad \text{and} \quad
\widehat{\rw}_{k} = \frac{1}{n}\sum_{i=1}^n{I(Y_{i}=k)}.
\label{eq:entr:max:1}
\end{eqnarray}
The entropy of $\widehat{\vw}$ is given by
\begin{eqnarray}
H(\widehat{\vw}) = -\sum_{k=1}^K{\widehat{\rw}_{k}\log_2(\widehat{\rw}_{k})}.
\label{eq:entr:max:2}
\end{eqnarray}
{\it The random variable $Y_i$ is maximal in a set-theoretic sense, and
and can be thought of as the categorical analogue of the sum of numeric $X_j$'s.}
Using $\widehat{\vw}$, an alternative definition of the {\it
information consistency ratio} $\varphi$ is
\begin{eqnarray}
\varphi=
1 - \frac{\underset{i=1,\cdots,n}{\mathtt{min}}\Big\{H\left(\widehat{\vz}_i\right)\Big\}}
{H\left({\widehat{\vw}}\right)}.
\label{eq:entr:index:2}
\end{eqnarray}
An even more stringent measure of the information consistency ratio is given by
\begin{eqnarray}
\varphi=
1 - \frac{\underset{i=1,\cdots,n}{\mathtt{max}}\Big\{H\left(\widehat{\vz}_i\right)\Big\}}
{H\left({\widehat{\vw}}\right)}.
\label{eq:entr:index:3}
\end{eqnarray}
\end{definition}

\section{Demonstration of Properties of $\varphi$}
We use a simple simulation setup to empirically compare the different measures presented in this paper.
We set $p=50$ and $n=1000$ and we vary the ratio of perfectly reliable components from $10\%$ to
$100\%$ by $10\%$. For $i=1,\cdots,n$ and $j=1,\cdots,n$, draw the $\rx_{ij}$'s uniformly with replacement from
$\{1,2,\cdots,K\}$, that is,
$$
{\tt Draw} \quad \rx_{ij} \sim {\tt uniform}(1,2,\cdots,K).
$$
Randomly replace $100{\tt c}\%$ of the columns of $\mX$ with the same column of constant values, where
${\tt c} \in \{0.1,0.2,\cdots,0.9,1\}$.
Table \eqref{tab:empirical:entr:cronb:1} shows the simulated values of the information consistency ratio
and Cronbach's alpha coefficient for different fractions of of reliable components in the instrument.
Figure \eqref{fig:entropy:vs:cronbach:1} is a direct pictorial representation of the
numbers from Table \eqref{tab:empirical:entr:cronb:1}, and we can see that the Cronbach alpha coefficient
is less strick than the information consistency ratio.
\begin{table}[!htbp]
{\begin{tabular}{@{\extracolsep{5pt}} lrrrrrrrrrr}
\\[-1.8ex]\toprule \\[-1.8ex]
Fraction & $10$ & $20$ & $30$ & $40$ & $50$ & $60$ & $70$ & $80$ & $90$ & $100$ \\ \hline
$\varphi_1$ & $0.230$ & $0.270$ & $0.330$ & $0.440$ & $0.520$ & $0.630$ & $0.740$ & $0.900$ & $1.000$ & $1.000$ \\
$\varphi_2$ & $0.000$ & $0.000$ & $0.020$ & $0.080$ & $0.140$ & $0.240$ & $0.360$ & $0.520$ & $0.720$ & $1.000$ \\
$\varphi_3$ & $0.230$ & $0.270$ & $0.330$ & $0.440$ & $0.520$ & $0.630$ & $0.740$ & $0.900$ & $1.000$ & $1.000$ \\
$\varphi_4$ & $0.000$ & $0.000$ & $0.020$ & $0.080$ & $0.140$ & $0.240$ & $0.360$ & $0.520$ & $0.720$ & $1.000$ \\
${\tt Cronbach}$ & $0.380$ & $0.700$ & $0.820$ & $0.910$ & $0.940$ & $0.960$ & $0.980$ & $0.990$ & $1.000$ & $1.000$ \\
\bottomrule \\[-1.8ex]
\end{tabular}}
  \caption{Simulated values of the information consistency ratio and Cronbach's alpha coefficient for different fractions
  of reliable components in the instrument.}
\label{tab:empirical:entr:cronb:1}
\end{table}

\begin{figure}
  \centering
  \includegraphics[width=10cm, height=8cm]{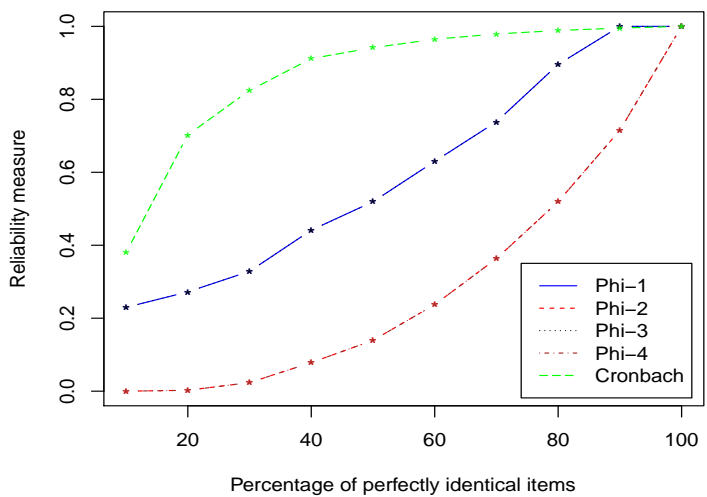}\\
  \caption{Comparative curves of $\varphi$ and Cronbach alpha as measures of internal consistency.}
  \label{fig:entropy:vs:cronbach:1}
\end{figure}

\section{Conclusion and Discussion}
We have proposed and developed an information-theoretic  measure of internal data consistency  et demonstrated
via straightforward simulation that it does indeed capture  the amount of information potentially contained in the data for,the purposes
of performing all kinds of pattern for the data. We have also provided several many other measures of
similarity over probabilistic vectors that we intend to use for further refined our proposed information consistency ratio $\varphi$.
We intend to conduct a larger simulation study to establish our proposed measure on a stronger footing. We also plan to
compare the predictive power of ICR to Cronbach's alpha coefficient on various real and simulated data.

\section*{Acknowledgements}
Ernest Fokou\'e wishes to express his heartfelt gratitude and infinite thanks to Our Lady of Perpetual Help for Her
ever-present support and guidance, especially for the uninterrupted flow of inspiration received through Her
most powerful intercession.

\bibliographystyle{chicago}
\bibliography{fg-entropy-ref}

\end{document}